\newtheorem{theorem}{Theorem}
\newtheorem{corollary}[theorem]{Corollary}
\newtheorem{lemma}[theorem]{Lemma}
\theoremstyle{definition}
\newtheorem{definition}[theorem]{Definition}
\newtheorem{remark}[theorem]{Remark}
\newtheorem{example}[theorem]{Example}
\DeclareMathOperator{\rk}{\mathrm{rk}}
\DeclareMathOperator{\VB}{\mathsf{VB}}
\DeclareMathOperator{\Tri}{\mathsf{Tri}}
\DeclareMathOperator{\Hom}{\mathsf{Hom}}
\DeclareMathOperator{\Spec}{\mathsf{Spec}}
\newcommand{\kk}{\boldsymbol{k}}
\newcommand{\CC}{\mathbb{C}}
\newcommand{\ZZ}{\mathbb{Z}}
\newcommand{\FF}{\mathbb{F}}
\newcommand{\PP}{\mathbb{P}}
\newcommand{\kB}{\mathcal{B}}
\newcommand{\kF}{\mathcal{F}}
\newcommand{\kI}{\mathcal{I}}
\newcommand{\kO}{\mathcal{O}}
\newcommand{\kV}{\mathcal{V}}
\newcommand{\kT}{\mathcal{T}}
\newcommand{\mm}{\mathsf{m}}
\newcommand{\lar}{\longrightarrow}
\begin{document}

\title[Frobenius morphism and vector bundles]{Frobenius morphism and vector bundles on cycles of projective lines}
\author{Igor Burban}
\address{
Mathematical  Institute,
Weyertal 86--90, 
D--50931 Cologne,
Germany
}
\email{burban@math.uni-koeln.de}

\subjclass[2010]{Primary 14H60, 14G17}
\keywords{Frobenius morphism, vector bundles on curves of genus one}

\begin{abstract}
In this paper we describe the action of the Frobenius morphism on the indecomposable
vector bundles on cycles of projective lines. This gives an answer to a  question of Paul Monsky, which
appeared in his study of the Hilbert--Kunz theory for plane cubic curves.
\end{abstract}

\maketitle

This article  arose  as an  answer to  a question posed by Paul Monsky in his study of the Hilbert--Kunz theory
for   plane cubic curves \cite{Monsky}.
Let $\kk$ be an algebraically closed  field of characteristic $p >0$   and $E$ be
a projective curve of arithmetic genus one over $\kk$. We are interested in an explicit description of the  action
of the Frobenius morphism on the  indecomposable vector bundles on $E$. In
the case of elliptic curves, this problem has been  solved  by Oda \cite[Theorem 2.16]{Oda}.
In this article we deal with  the case when $E$ is an irreducible rational
nodal curve or a cycle of projective lines.

\medskip

We start by  recalling the general technique used in a  study of
vector bundles  on singular projective curves, see \cite{DrozdGreuel, BBDG, Thesis}.
Let $X$ be a reduced singular (projective)  curve over $\kk$, $\pi:  \widetilde{X} \to  X$
its normalization and  $\kI :=
{\mathcal Hom}_\kO\bigl(\pi_*(\kO_{\widetilde{X}}), \kO\bigr) =
{\mathcal A}nn_\kO\bigl(\pi_*(\kO_{\widetilde{X}})/\kO\bigr)$
the conductor ideal sheaf.
Denote  by $\eta: Z = V(\kI) \rightarrow X$ the
closed artinian subscheme   defined by $\kI$
(its topological support is precisely the singular locus of $X$)  and by
$\tilde\eta: \widetilde{Z} \rightarrow  \widetilde{X}$ its preimage in
$\widetilde{X}$, defined by the Cartesian  diagram
\begin{equation}\label{E:keydiag}
\begin{array}{c}
\xymatrix
{\widetilde{Z} \ar[r]^{\tilde{\eta}} \ar[d]_{\tilde{\pi}}
& \widetilde{X} \ar[d]^\pi \\
Z \ar[r]^\eta & X.
}
\end{array}
\end{equation}

\begin{definition}
  The category of triples $\Tri(X)$ is defined as follows.
  \begin{itemize}
  \item Its objects are triples
    $\bigl(\widetilde\kF, \kV, \widetilde \mm\bigr)$, where
    $\widetilde\kF \in \VB(\widetilde{X})$, $\kV \in \VB(Z)$ and
    $$ \mm: \tilde{\pi}^*\kV \lar \tilde{\eta}^*\widetilde\kF$$ is an
    isomorphism of $\kO_{\widetilde{Z}}$--modules, called the \emph{gluing map}.
  \item The set of morphisms
    $\Hom_{\Tri(X)}\bigl((\widetilde\kF_1, \kV_1, {\mm}_1),
    (\widetilde\kF_2, \kV_2, {\mm}_2)\bigr)$ consists of all pairs
    $(F,f)$, where $F: \widetilde\kF_1 \to  \widetilde\kF_2$ and
    $f: \kV_1 \to \kV_2$
    are morphisms of vector bundles such that the following diagram is
    commutative
    $$
    \xymatrix
    {\tilde{\pi}^*\kV_1 \ar[r]^{\mm_1}\ar[d]_{\tilde{\pi}^*(f)} &
      \tilde{\eta}^*\widetilde\kF_1
      \ar[d]^{\tilde{\eta}^*(F)} \\
      \tilde{\pi}^*\kV_2 \ar[r]^{\mm_2} & \tilde{\eta}^*\widetilde\kF_2.
    }
    $$
  \end{itemize}
\end{definition}

\begin{theorem}[Lemma 2.4 in \cite{DrozdGreuel}, see also
  Theorem 1.3 in \cite{Thesis}]\label{thm:Drozd-Greuel}
  Let $X$ be a reduced curve over $\kk$. Then the functor
     $\mathbb{F}: \VB(X) \rightarrow  \Tri(X)$ assigning  to a  vector
bundle $\kF$ the triple  \linebreak
$(\pi^*\kF, \eta^*\kF, {\mm}_{\kF})$, where
${\mm}_{\kF}: \tilde{\pi}^*(\eta^*\kF) \rightarrow \tilde\eta^*(\pi^*\kF)$ is
the canonical isomorphism,  is  an equivalence of categories.
\end{theorem}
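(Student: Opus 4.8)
The plan is to prove the statement by exhibiting an explicit quasi-inverse functor $\mathbb{G}: \Tri(X) \to \VB(X)$ together with natural isomorphisms $\mathbb{F}\circ \mathbb{G} \cong \id$ and $\mathbb{G}\circ \mathbb{F} \cong \id$; once these are in place, essential surjectivity, fullness and faithfulness of $\mathbb{F}$ all follow at once. The entire construction is local on $X$, so I would first observe that away from the finitely many singular points one has $\kI = \kO_X$, whence $Z$ and $\widetilde{Z}$ are empty there and both sides restrict to vector bundles on a smooth curve with no gluing datum. Thus everything reduces to a statement about finitely generated modules over the local rings $\kO_{X,x}$ at the singular points $x$, where $\kO_{X,x}$ is a reduced one-dimensional local ring whose normalization $\bigl(\pi_*\kO_{\widetilde{X}}\bigr)_x$ is a finite product of discrete valuation rings.

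The algebraic input underlying the whole argument is that, because $\kI$ is the \emph{conductor}, it is simultaneously an ideal of $\kO_X$ and of $\pi_*\kO_{\widetilde{X}}$, and applying structure sheaves to the Cartesian diagram \eqref{E:keydiag} yields a commutative square of sheaves of algebras
\[
\xymatrix{
\kO_X \ar[r] \ar[d] & \pi_*\kO_{\widetilde{X}} \ar[d] \\
\eta_*\kO_Z \ar[r] & \eta_*\tilde{\pi}_*\kO_{\widetilde{Z}}
}
\]
which is itself \emph{Cartesian}, i.e.\ $\kO_X \cong \pi_*\kO_{\widetilde{X}} \times_{\eta_*\tilde{\pi}_*\kO_{\widetilde{Z}}} \eta_*\kO_Z$, with both downward maps surjective. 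This is the sheaf-theoretic form of a Milnor (conductor) square, and I would record the accompanying base-change isomorphism $\eta^*\pi_* \cong \tilde{\pi}_*\tilde{\eta}^*$ coming from the Cartesianity of \eqref{E:keydiag} and the finiteness of $\pi$.

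I would then define $\mathbb{G}$ on a triple $(\widetilde\kF, \kV, \mm)$ by gluing,
\[
\mathbb{G}(\widetilde\kF,\kV,\mm) \;=\; \pi_*\widetilde\kF \;\times_{\,\eta_*\tilde{\pi}_*\tilde{\eta}^*\widetilde\kF\,}\; \eta_*\kV,
\]
where $\pi_*\widetilde\kF \to \eta_*\tilde{\pi}_*\tilde{\eta}^*\widetilde\kF$ is restriction to $\widetilde{Z}$ via the base-change isomorphism above, and $\eta_*\kV \to \eta_*\tilde{\pi}_*\tilde{\eta}^*\widetilde\kF$ is the adjunction unit $\kV \to \tilde{\pi}_*\tilde{\pi}^*\kV$ followed by $\tilde{\pi}_*(\mm)$. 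The technical heart, and the step I expect to be the main obstacle, is to show that this fiber product sheaf is again \emph{locally free of the correct rank} and that applying $\mathbb{F}$ returns the original triple. I would dispatch this by Milnor's patching theorem for projective modules over a fiber product of rings $R = R_1 \times_{\bar{R}} R_2$ in which $R_1 \to \bar{R}$ is surjective: such modules are classified exactly by triples $(P_1, P_2, h)$ of projective $R_i$-modules with an isomorphism $h: P_1 \otimes_{R_1} \bar{R} \xrightarrow{\sim} P_2 \otimes_{R_2} \bar{R}$. Applying this at each singular point with $R_1 = \bigl(\pi_*\kO_{\widetilde{X}}\bigr)_x$, $R_2 = \kO_{Z,x}$ and $\bar{R} = \kO_{\widetilde{Z},x}$, the surjectivity of the conductor quotient maps is precisely what guarantees projectivity of the glued module, while tensoring with the total ring of fractions and using that $\mm$ is an isomorphism forces the two ranks to agree.

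Finally, for full faithfulness I would apply the same Cartesian square to the internal $\mathcal{H}om$-sheaves: since $\mathcal{H}om_X(\kF_1,\kF_2)$ is determined by the fiber product of $\pi_*\mathcal{H}om_{\widetilde{X}}(\pi^*\kF_1,\pi^*\kF_2)$ and $\eta_*\mathcal{H}om_Z(\eta^*\kF_1,\eta^*\kF_2)$ over $\widetilde{Z}$, a morphism of triples $(F,f)$ — being by definition a pair compatible over $\widetilde{Z}$ — glues to a unique $\kO_X$-homomorphism $\kF_1 \to \kF_2$, and conversely every such homomorphism restricts to a compatible pair. Passing to global sections identifies $\Hom_{\Tri(X)}$ with $\Hom_{\VB(X)}$, which together with the quasi-inverse $\mathbb{G}$ constructed above shows that $\mathbb{F}$ is an equivalence of categories.
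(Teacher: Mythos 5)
Your proposal is correct: the paper itself states this result without proof, citing Lemma 2.4 of \cite{DrozdGreuel} and Theorem 1.3 of \cite{Thesis}, and your argument --- the Cartesian conductor (Milnor) square, the glued fiber-product quasi-inverse, Milnor patching for projectivity at the singular points, and the $\mathcal{H}om$-sheaf fiber product for full faithfulness --- is essentially the standard proof given in those references. No gaps worth flagging.
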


\begin{remark}
In the partial case when $X$ is a configuration of projective lines intersecting transversally,
the above theorem also follows  from a more general result of Lunts \cite{Lunts}.
\end{remark}

For a ringed space $(Y, \kO_Y)$ over  $\kk$ we denote by
$\varphi_Y$ the Frobenius morphism $(Y, \kO_Y) \rightarrow  (Y, \kO_Y)$. Then for
an open set $U \subset Y$ the ring  homomorphism $\varphi_Y(U): \kO_Y(U) \to \kO_Y(U)$ is given by the formula
 $\varphi_Y(f) = f^p$,  $f \in \kO_Y(U)$.
For simplicity, we shall frequently omit the subscript in the notation
of the Frobenius map.

\begin{definition}
  Let endofunctor $\PP: \Tri(X) \lar \Tri(X)$ be  defined as follows. For an
   object $\kT = (\widetilde\kF, \kV, \mm)$  of the category $\Tri(X)$ we set
 $\PP(\kT) := \bigl(\varphi^*\widetilde\kF,  \varphi^*\kV, \mm^\varphi\bigr)$, where
 the gluing map $\mm^\varphi$ is determined via the commutative diagram
 $$
 \xymatrix{
 \varphi^* \tilde\pi^* \kV \ar[rr]^{\varphi^*(\mm)} \ar[d]_{\mathsf{can}} & & \varphi^* \tilde\eta^* \widetilde\kF \ar[d]^{\mathsf{can}}\\
 \tilde\pi^* \varphi^* \kV \ar[rr]^{\mm^\varphi} & & \tilde\eta^* \varphi^* \widetilde\kF,
 }
$$
and both vertical maps are canonical isomorphisms.
\end{definition}

\begin{lemma}\label{L:Frob-and-Triples}
Consider the following diagram of categories
and functors:
$$
\xymatrix{
\VB(X) \ar[rr]^{\FF} \ar[d]_{\varphi_X^*} & & \Tri(X) \ar[d]^{\PP}\\
\VB(X) \ar[rr]^{\FF} & & \Tri(X),
}
$$
Then there exists an isomorphism  $\PP \circ \FF \rightarrow \FF \circ \varphi_X^*$.
\end{lemma}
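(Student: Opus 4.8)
The plan is to build the isomorphism out of the naturality of the \emph{absolute} Frobenius and the coherence of the pull-back pseudofunctor $Y\mapsto\VB(Y)$. For every morphism $g\colon Y\to Y'$ of $\kk$-schemes the absolute Frobenius morphisms satisfy $g\circ\varphi_Y=\varphi_{Y'}\circ g$: on functions one side applies the comorphism of $g$ and then raises to the $p$-th power, the other raises to the $p$-th power and then applies the comorphism, and every ring homomorphism commutes with $p$-th powers. From this equality of morphisms the structural isomorphisms of the pull-back pseudofunctor furnish a canonical isomorphism of functors
$$
\theta_g\colon \varphi_Y^*\,g^* \xrightarrow{\ \sim\ } g^*\,\varphi_{Y'}^*,
\qquad
\varphi_Y^*g^*\cong(g\varphi_Y)^*=(\varphi_{Y'}g)^*\cong g^*\varphi_{Y'}^*,
$$
natural in the pulled-back bundle. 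I would record the four instances $\theta_\pi,\theta_\eta,\theta_{\tilde\pi},\theta_{\tilde\eta}$ attached to the morphisms of diagram (\ref{E:keydiag}). Likewise $\mm_\kF$ is the structural base-change isomorphism attached to the Cartesian square, realizing $\tilde\pi^*\eta^*\cong(\eta\tilde\pi)^*=(\pi\tilde\eta)^*\cong\tilde\eta^*\pi^*$.

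For $\kF\in\VB(X)$ one computes
$$
\PP\FF(\kF)=\bigl(\varphi^*\pi^*\kF,\ \varphi^*\eta^*\kF,\ (\mm_\kF)^\varphi\bigr),
\qquad
\FF\varphi_X^*(\kF)=\bigl(\pi^*\varphi_X^*\kF,\ \eta^*\varphi_X^*\kF,\ \mm_{\varphi_X^*\kF}\bigr).
$$
I would define the candidate morphism of triples to be the pair $\bigl(\theta_\pi(\kF),\theta_\eta(\kF)\bigr)$, whose components are isomorphisms of vector bundles on $\widetilde X$ and on $Z$ identifying the first and second entries of the two triples. Since each $\theta_g$ is natural in the bundle, this pair is automatically natural in $\kF$, so it assembles into a natural transformation $\PP\circ\FF\to\FF\circ\varphi_X^*$; the only substantive point is that each component is a \emph{morphism in} $\Tri(X)$, i.e.\ compatible with the gluing maps.

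Compatibility amounts to commutativity of the square
$$
\xymatrix{
\tilde\pi^*\varphi^*\eta^*\kF \ar[r]^{(\mm_\kF)^\varphi} \ar[d]_{\tilde\pi^*\theta_\eta(\kF)} & \tilde\eta^*\varphi^*\pi^*\kF \ar[d]^{\tilde\eta^*\theta_\pi(\kF)} \\
\tilde\pi^*\eta^*\varphi_X^*\kF \ar[r]^{\mm_{\varphi_X^*\kF}} & \tilde\eta^*\pi^*\varphi_X^*\kF.
}
$$
Here I would unfold $(\mm_\kF)^\varphi$ through its defining diagram, observing that the two maps labelled $\mathsf{can}$ there are precisely $\theta_{\tilde\pi}(\eta^*\kF)$ and $\theta_{\tilde\eta}(\pi^*\kF)$, while $\mm_\kF$ and $\mm_{\varphi_X^*\kF}$ are the base-change isomorphisms above. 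After this identification every arrow in the square is a structural isomorphism of the pull-back pseudofunctor.

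To finish I would note that combining the Frobenius square for $\eta$ with the Cartesian square gives equalities of morphisms $\widetilde Z\to X$, namely $\eta\circ\varphi_Z\circ\tilde\pi=\varphi_X\circ\eta\circ\tilde\pi=\varphi_X\circ\pi\circ\tilde\eta$. Hence the source $\tilde\pi^*\varphi^*\eta^*\kF$ and the target $\tilde\eta^*\pi^*\varphi_X^*\kF$ are both canonically the pull-back of $\kF$ along this single composite, and each path around the square is a composite of structural isomorphisms between iterated pull-backs of $\kF$ along equal composites of $\pi,\eta,\tilde\pi,\tilde\eta,\varphi$. By the coherence theorem for the pull-back pseudofunctor, any two such structural isomorphisms sharing source and target coincide, so the square commutes and $\bigl(\theta_\pi(\kF),\theta_\eta(\kF)\bigr)$ is the desired isomorphism of triples. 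I expect the only real obstacle to be the bookkeeping in this last step: correctly matching the $\mathsf{can}$ maps in the definition of $\PP$ with the Frobenius-naturality isomorphisms $\theta$, and verifying that the underlying composite morphisms at all four corners agree, after which commutativity is forced by coherence and no genuine computation remains.
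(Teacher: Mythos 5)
Your proposal is correct and follows essentially the same route as the paper: the paper's proof likewise takes the canonical isomorphisms $\varphi^*\pi^*\kF \to \pi^*\varphi^*\kF$ and $\varphi^*\eta^*\kF \to \eta^*\varphi^*\kF$ as the components of the natural transformation and reduces everything to the commutativity of the square relating $\mm_\kF^\varphi$ and $\mm_{\varphi^*\kF}$. The only difference is that the paper simply asserts that square commutes, whereas you justify it by identifying all arrows as structural isomorphisms of the pull-back pseudofunctor between pull-backs along equal composites and invoking coherence --- a legitimate filling-in of the detail the paper leaves implicit.
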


\begin{proof} Let $\kF$ be a vector bundle on $X$. Then the  canonical isomorphisms
$\varphi^* \tilde\eta^* \kF \rightarrow \tilde\eta^* \varphi^*\kF$ and
$\varphi^* \pi^* \kF \rightarrow \pi^* \varphi^*\kF$ induce the commutative diagram
$$
 \xymatrix{
 \tilde\pi^* \varphi^* \tilde\eta^* \kF \ar[rr]^{\mm_\kF^\varphi} \ar[d]_{\mathsf{can}} & & \tilde\eta^* \varphi^* \pi^* \kF \ar[d]^{\mathsf{can}} \\
\tilde\pi^*  \tilde\eta^* \varphi^* \kF \ar[rr]^{\mm_{\varphi^*\kF}} & & \tilde\eta^*  \pi^* \varphi^* \kF,
 }
$$
which yields  the desired isomorphism of functors.
\end{proof}

Next, we need a description of the action of the Frobenius map on the vector
bundles on a projective line.
Let $(z_0, z_1)$ be coordinates on $V = \CC^2$. They induce homogeneous
coordinates $(z_0:z_1)$ on
$\PP^1 = \PP^1(V) = \bigl(V \setminus\{0\}\bigr)/\sim$, where $v \sim \lambda v$ for all $v \in V$
and
$\lambda \in \CC^*$.
We set
$U_0 = \bigl\{(z_0: z_1)| z_0 \ne 0\bigr\}$ and $U_\infty  = \bigl\{(z_0: z_1)| z_1 \ne 0\bigr\}$
and put $0 := (1: 0)$, $\infty := (0: 1)$, $z = z_1/z_0$ and $w = z_0/z_1$.
So, $z$ is a   coordinate in a  neighbourhood of $0$.
If $U = U_0 \cap U_\infty$ and $w=1/z$ is used as a coordinate on
$U_{\infty}$, then the transition function of the line bundle $\kO_{\PP^1}(n)$ is
\begin{equation}\label{E:actionofFrob}
 U_0 \times \mathbb{C} \supset  U \times \mathbb{C}
\xrightarrow{(z,v) \mapsto \left(\frac{1}{z}, \frac{v}{z^n}\right)}
U \times \mathbb{C} \subset  U_\infty \times \mathbb{C}.
\end{equation}

\medskip
\noindent
The proof of the following lemma is straightforward.

\begin{lemma}
For any $n \in \ZZ$ we have:
$\varphi^*\bigl(\kO_{\PP^1}(n)\bigr) \cong  \kO_{\PP^1}(np)$.
\end{lemma}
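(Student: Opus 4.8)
The plan is to read off the effect of $\varphi^*$ directly from the transition function of $\kO_{\PP^1}(n)$ recorded in \eqref{E:actionofFrob}. With respect to the standard cover $\PP^1 = U_0 \cup U_\infty$ and the coordinate $z = z_1/z_0$ on $U_0$, the line bundle $\kO_{\PP^1}(n)$ is trivial on each $U_i$, and its transition cocycle on the overlap $U = U_0 \cap U_\infty$ is the invertible function $g_n(z) = z^{-n}$. Since a line bundle on $\PP^1$ is determined up to isomorphism by such a cocycle (two cocycles define isomorphic bundles precisely when they differ by a coboundary), it suffices to compute the cocycle representing $\varphi^*\kO_{\PP^1}(n)$ in the same trivializing data.

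First I would note that the Frobenius morphism $\varphi = \varphi_{\PP^1}$ is the identity on the underlying topological space; in particular it preserves the open cover, so that $\varphi^{-1}(U_i) = U_i$ and the pullback $\varphi^*\kO_{\PP^1}(n)$ is again trivialized on $U_0$ and $U_\infty$. Next, for an arbitrary morphism the transition functions of a pullback bundle are obtained by composing the original transition functions with that morphism; for the Frobenius this composition is, by definition, the ring homomorphism $f \mapsto f^p$. Hence the cocycle of $\varphi^*\kO_{\PP^1}(n)$ on $U$ is $\varphi^\#(g_n) = g_n^{\,p} = z^{-np}$.

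Finally, comparing with \eqref{E:actionofFrob} once more, $z^{-np}$ is exactly the transition cocycle $g_{np}$ of $\kO_{\PP^1}(np)$, and therefore $\varphi^*\bigl(\kO_{\PP^1}(n)\bigr) \cong \kO_{\PP^1}(np)$, as claimed. The only point demanding care is the bookkeeping of how $\varphi^*$ acts on the trivializing data, namely the assertion that raising the transition function to the $p$-th power computes the pullback; but because $\varphi$ is a homeomorphism this reduces to the elementary functoriality of pullback on cocycles, so no genuine obstacle arises. Alternatively, one could argue more structurally: $\Pic(\PP^1) \cong \ZZ$ is generated by $\kO_{\PP^1}(1)$ and $\varphi^*$ is a homomorphism on $\Pic$, so it is enough to identify $\varphi^*\kO_{\PP^1}(1)$; as $\varphi$ has degree $p$ it multiplies degrees by $p$, giving $\varphi^*\kO_{\PP^1}(1) \cong \kO_{\PP^1}(p)$ and hence the general statement by tensoring.
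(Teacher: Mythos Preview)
Your argument is correct and matches the paper's intended approach: the paper does not write out a proof at all, merely stating that it is straightforward, but the transition-function description \eqref{E:actionofFrob} is placed immediately before the lemma precisely so that the reader can carry out exactly the computation you give. Your alternative via $\Pic(\PP^1)\cong\ZZ$ is also fine and equally brief.
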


\noindent
Next, recall the following classical result on vector bundles on a projective line.

\begin{theorem}[Birkhoff--Grothendieck]\label{T:BirkhoffGrothendieck}
Any vector bundle
$\widetilde\kF$ on $\PP^1$   splits into a direct sum of line  bundles:
\begin{equation}\label{E:BirkGroth}
\widetilde\kF \cong \bigoplus_{l \in \mathbb{Z}} \kO_{\PP^1}(l)^{m_l}.
\end{equation}
\end{theorem}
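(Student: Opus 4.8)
The plan is to argue by induction on the rank $r$ of $\widetilde\kF$, using only the cohomology of line bundles on $\PP^1$: that $H^{0}\bigl(\PP^1, \kO_{\PP^1}(m)\bigr) \ne 0$ precisely when $m \ge 0$, that $H^{1}\bigl(\PP^1, \kO_{\PP^1}(m)\bigr) = 0$ for $m \ge -1$, and the standard identification $\Ext^{1}\bigl(\kO_{\PP^1}(b), \kO_{\PP^1}(a)\bigr) \cong H^{1}\bigl(\PP^1, \kO_{\PP^1}(a-b)\bigr)$. The base case $r = 1$ is the statement $\Pic(\PP^1) \cong \ZZ$: a line bundle of degree $n$ has a nowhere-vanishing transition function on $U_0 \cap U_\infty$, which after a change of trivialization is the one displayed in (\ref{E:actionofFrob}), so it is isomorphic to $\kO_{\PP^1}(n)$. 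For brevity write $\kO(m) := \kO_{\PP^1}(m)$.

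For the inductive step the first move is to extract a line subbundle of maximal degree. Since $H^{0}\bigl(\widetilde\kF(-a)\bigr)$ vanishes for $a \gg 0$ and is nonzero for $a \ll 0$, there is a unique integer $a$ with $H^{0}\bigl(\widetilde\kF(-a)\bigr) \ne 0$ and $H^{0}\bigl(\widetilde\kF(-a-1)\bigr) = 0$. A nonzero section of $\widetilde\kF(-a)$ is a morphism $s: \kO(a) \to \widetilde\kF$, and the key point is that $s$ is nowhere vanishing: if $s$ vanished at a point $x$, then, using that every point of $\PP^1$ satisfies $\kI_x \cong \kO(-1)$, the section $s$ would come from a nonzero element of $H^{0}\bigl(\widetilde\kF(-a)\otimes\kI_x\bigr) \cong H^{0}\bigl(\widetilde\kF(-a-1)\bigr)$, contradicting the choice of $a$. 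Hence $s$ realizes $\kO(a)$ as a subbundle, giving a short exact sequence $0 \to \kO(a) \to \widetilde\kF \to \kQ \to 0$ with $\kQ$ locally free of rank $r-1$.

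By the induction hypothesis $\kQ \cong \bigoplus_{i=1}^{r-1} \kO(b_i)$. To split the sequence it suffices to show $\Ext^{1}\bigl(\kQ, \kO(a)\bigr) = \bigoplus_i H^{1}\bigl(\PP^1, \kO(a - b_i)\bigr) = 0$, and for this it is enough to prove $b_i \le a$ for every $i$. This is where maximality of $a$ is used a second time: tensoring the sequence by $\kO(-a-1)$ and taking cohomology, the vanishing $H^{0}\bigl(\kO(-1)\bigr) = 0 = H^{1}\bigl(\kO(-1)\bigr)$ gives an isomorphism $H^{0}\bigl(\widetilde\kF(-a-1)\bigr) \cong H^{0}\bigl(\kQ(-a-1)\bigr) = \bigoplus_i H^{0}\bigl(\kO(b_i - a - 1)\bigr)$. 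The left-hand side is zero by the choice of $a$, so $b_i - a - 1 < 0$, i.e. $b_i \le a$, for all $i$. Consequently $a - b_i \ge 0 \ge -1$, the relevant $H^{1}$ groups vanish, the extension splits, and $\widetilde\kF \cong \kO(a) \oplus \bigoplus_i \kO(b_i)$.

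I expect the main obstacle to be the two places where maximality of the twist $a$ is invoked — first to guarantee that the chosen section yields a genuine subbundle rather than merely a subsheaf, and second to bound the degrees $b_i$ of the quotient; both rest on the special feature of $\PP^1$ that all points are linearly equivalent. An alternative, more computational route avoids cohomology entirely: a rank-$r$ bundle is recorded by a transition matrix $A \in \GL_r\bigl(\kk[z, z^{-1}]\bigr)$ on $U_0 \cap U_\infty$, and isomorphism classes correspond to double cosets with respect to $\GL_r\bigl(\kk[z]\bigr)$ and $\GL_r\bigl(\kk[z^{-1}]\bigr)$; one then reduces $A$ to a diagonal matrix $\mathrm{diag}(z^{n_1}, \dots, z^{n_r})$ by row and column operations that decrease a suitable degree invariant, which is the Birkhoff factorization.
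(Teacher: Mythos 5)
Your argument is correct, but note that the paper does not prove this statement at all: it is quoted as the classical Birkhoff--Grothendieck theorem, recalled without proof, so there is no in-paper argument to compare against. What you have written is the standard cohomological proof and it holds together: the induction on rank, the choice of the maximal twist $a$ with $H^{0}\bigl(\widetilde\kF(-a)\bigr) \ne 0$, the observation that a section vanishing at a point $x$ would lift along $\kI_x \cong \kO_{\PP^1}(-1)$ to a section of $\widetilde\kF(-a-1)$ (so maximality forces the section to be nowhere zero and hence to define a subbundle with locally free quotient), the bound $b_i \le a$ extracted from the long exact sequence twisted by $\kO_{\PP^1}(-a-1)$ using both $H^{0}\bigl(\kO_{\PP^1}(-1)\bigr)=0$ and $H^{1}\bigl(\kO_{\PP^1}(-1)\bigr)=0$, and the resulting vanishing of $\Ext^{1}\bigl(\kQ, \kO_{\PP^1}(a)\bigr)$ are all used correctly. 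The only steps left implicit --- that $H^{0}\bigl(\widetilde\kF(-a)\bigr)$ is nonzero for $a \ll 0$ and zero for $a \gg 0$, and the computation $\Pic(\PP^1)\cong\ZZ$ in the base case --- are standard and acceptable at this level of detail. Your alternative sketch via Birkhoff factorization of the transition matrix in $\GL_r\bigl(\kk[z,z^{-1}]\bigr)$ is in fact closer in spirit to how the paper actually uses the theorem (through explicit transition functions as in (\ref{E:actionofFrob}) and gluing matrices), but either route is legitimate.
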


\noindent
Now  assume that $E$ is an \emph{irreducible}  rational  \emph{nodal}  curve.
Note that by  the definition of being nodal we have:  $Z =  \Spec(\kk)$.

\begin{example}
The plane cubic curve $E \subset \mathbb{P}^2$,  given by the homogeneous equation $x^3+y^3 -xyz = 0$,
is an irreducible  rational curve with a nodal singularity  at $(0:0:1)$.
\end{example}

\noindent
Theorem  \ref{T:BirkhoffGrothendieck}
implies that for an object $(\widetilde{\kF}, \kV, \widetilde{\mm})$ of  $\Tri(E)$ with $\rk(\widetilde{\kF}) = n$, we have
\[\widetilde{\kF} = \bigoplus\limits_{l \in \mathbb{Z}} \kO_{\PP^1}(l)^{m_l}
\quad \text{ and } \quad \quad \kV \cong \kO_{Z}^n, \quad
\text { where } \sum\limits_{l \in \mathbb{Z}} m_l = n.\]
The vector bundle  $\kV$ is free, because $Z$ is artinian.  In order to describe
the morphism $\widetilde{\mm}$ in the terms of matrices, some additional choices have to be made.

Recall that  the vector bundle $\kO_{\PP^1}(-1)$  is isomorphic to the sheaf of sections
of  the so-called tautological line bundle
$$\bigl\{(l, v) | \, v \in l \bigr\} \subset \PP^1(V) \times V = \kO_{\PP^1}^2.$$
The choice of coordinates on $\PP^1$ fixes two distinguished elements, $z_0$
and $z_1$, in the vector space $\Hom_{\PP^1}\bigl(\kO_{\PP^1}(-1), \kO_{\PP^1}\bigr)$:
$$
\xymatrix
{ \PP^1 \times \mathbb{C}^2 \ar[rd] \ar@{<-^{)}}[r] &
                   \kO_{\PP^1}(-1) \ar[d] \ar[r]^{z_i} &
                   \PP^1 \times \mathbb{C} \ar[ld] \\
& \PP^1 &
}
$$
where $z_i$ maps $\bigl(l, (v_0, v_1)\bigr)$ to $(l, v_i)$ for  $i = 0,1$.
It is clear that the section $z_0$ vanishes at $\infty$ and $z_1$
vanishes at $0$.
In what follows,  we shall assume that the coordinates on the normalization $\widetilde{E}$ are chosen
in such a way that $\Spec(\kk \times \kk) \cong \widetilde{Z} = \pi^{-1}(Z) =
\bigl\{0, \infty\bigr\}$.

\begin{definition}\label{D:Trivialization}
For any $l \in \ZZ$ we define the isomorphism $\xi_l: \tilde\eta^*\bigl(\kO_{\PP^1}(l)\bigr)
\rightarrow \kO_{\widetilde{Z}}$ by the formula $\xi_l(s) = \bigl(\frac{s}{z_0^l}(0), \frac{s}{z_1^l}(\infty)\bigr)$, where $s$ is  an arbitrary  local section of the line bundle $\kO_{\PP^1}(l)$.
Hence, for any vector bundle $\widetilde\kF$ of rank $n$ on $\PP^1$ given by the formula
(\ref{E:BirkGroth}), we have the induced isomorphism $\xi_{\widetilde\kF}: \tilde\eta^*\widetilde\kF \rightarrow
\kO_{\widetilde{Z}}^n$.
\end{definition}

Let $(\widetilde\kF, \kO_Z^n, \mm)$ be an  object in  the category of triples $\Tri(E)$. Note
that we have a unique morphism  $M({\mm})$ making the following  diagram
commutative:
\begin{equation}\label{E:defgluematr}
\begin{array}{c}
\xymatrix{
\tilde\pi^* \kO_{Z}^n \ar[rr]^{\mm} \ar[d]_{\mathsf{can}} & & \tilde\eta^* \widetilde\kF \ar[d]^{\xi_{\widetilde\kF}} \\
\kO_{\widetilde{Z}}^n \ar[rr]^{M({\mm})} & & \kO_{\widetilde{Z}}^n,
}
\end{array}
\end{equation}
where the first vertical map is the canonical isomorphism. Moreover,
$M({\mm})$ is given by a pair of invertible $(n \times n)$ matrices
$M(0)$ and $M(\infty)$ over the field $\kk$.
Applying to
 (\ref{E:defgluematr}) the functor $\varphi^*$, we get the following
commutative diagram:
\begin{equation}
\begin{array}{c}
\xymatrix{
\tilde\pi^* \varphi^* \kO_{Z}^n \ar[rr]^{\mm^\varphi} \ar@/_40pt/[ddd]_{\mathsf{can}} & & \tilde\eta^* \varphi^* \widetilde\kF \ar@/^40pt/[ddd]^{\xi_{\varphi^*\widetilde\kF}} \\
\varphi^* \tilde\pi^*  \kO_{Z}^n \ar[rr]^{\varphi^*(\mm)} \ar[u]^{\mathsf{can}} \ar[d]_{\mathsf{can}} & & \varphi^* \tilde\eta^*  \widetilde\kF \ar[u]_{\mathsf{can}} \ar[d]^{\varphi^*(\xi_{\widetilde\kF})} \\
\varphi^*\bigl(\kO_{\widetilde{Z}}^n\bigr)  \ar[rr]^{\varphi^*(M({\mm}))} \ar[d]_{\mathsf{can}} & &
\varphi^*\bigl(\kO_{\widetilde{Z}}^n\bigr) \ar[d]^{\mathsf{can}} \\
\kO_{\widetilde{Z}}^n \ar[rr]^{M({\mm}^\varphi)} & & \kO_{\widetilde{Z}}^n.
}
\end{array}
\end{equation}

\begin{corollary}\label{C:actionFrob}
{\rm
Let $E$ be an irreducible rational nodal  curve over a field $\kk$ of characteristic $p >0 $ and $\kF$ be a vector bundle corresponding to the triple
$(\widetilde\kF, \kO_Z^n, \mm)$, where $\widetilde\kF \cong \oplus_{l \in \mathbb{Z}} \kO_{\PP^1}(l)^{m_l}$
and $\mm$ is given by a pair of matrices
$$
M(0) =
\left(
\begin{array}{cccc}
a_{11} & a_{12} & \dots & a_{1n} \\
a_{21} & a_{22} & \dots & a_{2n} \\
\vdots & \vdots & \ddots & \vdots \\
a_{n1} & a_{n2} & \dots & a_{nn}
\end{array}
\right)
\quad
\mbox{\rm and}
\quad
M(\infty) =
\left(
\begin{array}{cccc}
b_{11} & b_{12} & \dots & b_{1n} \\
b_{21} & b_{22} & \dots & b_{2n} \\
\vdots & \vdots & \ddots & \vdots \\
b_{n1} & b_{n2} & \dots & b_{nn}
\end{array}
\right).
$$
Then the vector bundle $\varphi^*\kF$ is given by the triple
$(\varphi^* \widetilde\kF, \kO_Z^n, \mm^\varphi)$, where $\varphi^*\widetilde\kF \cong \oplus_{l \in \mathbb{Z}} \kO_{\PP^1}(lp)^{m_l}$
and $\mm^\varphi$ corresponds to the pair of matrices
$$
\left(
\begin{array}{cccc}
a_{11}^p & a_{12}^p & \dots & a_{1n}^p \\
a_{21}^p & a_{22}^p & \dots & a_{2n}^p \\
\vdots & \vdots & \ddots & \vdots \\
a_{n1}^p & a_{n2}^p & \dots & a_{nn}^p
\end{array}
\right)
\quad
\mbox{\rm and}
\quad
\left(
\begin{array}{cccc}
b_{11}^p & b_{12}^p & \dots & b_{1n}^p \\
b_{21}^p & b_{22}^p & \dots & b_{2n}^p \\
\vdots & \vdots & \ddots & \vdots \\
b_{n1}^p & b_{n2}^p & \dots & b_{nn}^p
\end{array}
\right).
$$
}
\end{corollary}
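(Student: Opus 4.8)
The plan is to deduce the statement from Lemma~\ref{L:Frob-and-Triples} together with the explicit data recorded in the large commutative diagram preceding the corollary, thereby reducing everything to a single local computation over the artinian scheme $\widetilde{Z}$.

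First I would invoke Lemma~\ref{L:Frob-and-Triples}: since $\FF$ intertwines $\varphi_X^*$ and $\PP$, the triple attached to $\varphi^*\kF$ is isomorphic to $\PP(\FF(\kF)) = (\varphi^*\widetilde\kF,\, \varphi^*\kO_Z^n,\, \mm^\varphi)$. The first component is pinned down by the earlier lemma $\varphi^*(\kO_{\PP^1}(l)) \cong \kO_{\PP^1}(lp)$, which gives $\varphi^*\widetilde\kF \cong \bigoplus_l \kO_{\PP^1}(lp)^{m_l}$; the second stays $\kO_Z^n$, because a trivial bundle on the artinian scheme $Z = \Spec(\kk)$ pulls back to a trivial bundle of the same rank. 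What remains is to compute the pair of matrices $M(\mm^\varphi)$ representing $\mm^\varphi$ with respect to the trivializations $\xi_{\varphi^*\widetilde\kF}$ of Definition~\ref{D:Trivialization}.

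For this I would read off the bottom square of the large diagram the relation $M(\mm^\varphi) = \mathsf{can} \circ \varphi^*(M(\mm)) \circ \mathsf{can}^{-1}$, where the two maps $\mathsf{can}\colon \varphi^*(\kO_{\widetilde{Z}}^n) \to \kO_{\widetilde{Z}}^n$ are the canonical Frobenius-linearization isomorphisms of the trivial bundle. The statement then reduces to the following local fact over each point of $\widetilde{Z} = \Spec(\kk \times \kk)$: if $A = (a_{ij}) \in \Mat_n(\kk)$ represents a $\kk$-linear endomorphism of $\kk^n$ in the standard basis, then under the canonical isomorphism $\varphi^*(\kk^n) = \kk^n \otimes_{\kk,\varphi} \kk \xrightarrow{\sim} \kk^n$, $e_i \otimes 1 \mapsto e_i$, the endomorphism $\varphi^*(A) = A \otimes \id$ is represented by the entrywise power $(a_{ij}^p)$. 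This is immediate from the defining relation $(a_{ij}\, e_i) \otimes 1 = e_i \otimes a_{ij}^p$ in the tensor product. Applying this at the two points $0$ and $\infty$ turns $M(0)$ and $M(\infty)$ into $(a_{ij}^p)$ and $(b_{ij}^p)$, which is exactly the claim.

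The step demanding genuine care — and which I regard as the main obstacle — is checking that the trivializations are compatible with Frobenius in the sense implicit in the right-hand column of the diagram, i.e.\ that $\xi_{\varphi^*\widetilde\kF}$ matches $\varphi^*(\xi_{\widetilde\kF})$ through $\mathsf{can}$ with no spurious scalar factors. Concretely one must verify that the distinguished section $z_0 \in \Hom_{\PP^1}(\kO_{\PP^1}(-1), \kO_{\PP^1})$ pulls back, under $\varphi^*\kO_{\PP^1}(l) \cong \kO_{\PP^1}(lp)$, to $z_0^{lp}$ (and similarly for $z_1$), so that dividing a pulled-back section by $z_0^{lp}$ and evaluating at $0$ returns precisely the $p$-th power of the value produced by $\xi_l$. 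Granting this compatibility — which follows from the local description \eqref{E:actionofFrob} of the transition functions together with the identity $\varphi^*(f) = f^p$ on local sections — the diagram commutes exactly, and the entrywise $p$-th power formula follows with no correction terms.
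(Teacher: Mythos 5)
Your argument is correct and is essentially the paper's own: the corollary is read off from the large commutative diagram obtained by applying $\varphi^*$ to \eqref{E:defgluematr}, combined with Lemma~\ref{L:Frob-and-Triples} and the identification $\varphi^*(\kO_{\PP^1}(l)) \cong \kO_{\PP^1}(lp)$. Your explicit verification that $\varphi^*(A) = A \otimes \id$ becomes the entrywise $p$-th power under the canonical isomorphism $\kk^n \otimes_{\kk,\varphi} \kk \to \kk^n$, and your check that the trivializations $\xi_l$ are compatible with Frobenius, simply make explicit the steps the paper leaves to the reader.
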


\begin{remark}\label{R:classificDG}
Recall, that the  indecomposable vector bundles on an irreducible  nodal rational
 curve $E$ over an algebraically closed   field $\kk$ are described
by  the following data:
\begin{itemize}
\item a non-periodic sequence of integers $\mathbbm{d} = (d_1, \dots, d_l)$,
\item  a  positive integer $m$,
\item a continuous parameter $\lambda \in \kk^*$,
\end{itemize}
 see  \cite[Theorem 2.12]{DrozdGreuel} or  \cite[Section 3]{BBDG}.
 The corresponding indecomposable vector bundle
  $\kF = \kB\bigl(\mathbbm{d}, m,  \lambda\bigr)$ has rank
  $lm$. By the definition (see e.g. \cite[Algorithm 1]{BBDG}) the corresponding
  triple  $\PP(\kF) \cong
 \Bigl(\widetilde\kF, \kV, \bigl(M(0), M(\infty)\bigr)\Bigr)$ is the following:
 $\widetilde\kF = \kO_{\PP^1}(d_1)^{m} \oplus \dots \oplus \kO_{\PP^1}(d_l)^{m}$, $\kV = \kO_Z^{lm}$
 and the gluing matrices are
$$
M(0) = \mathbbm{1}_{ml \times ml} \quad \mbox{and} \quad
 M(\infty) =
 \left(
 \begin{array}{ccccc}
 0 & \mathbbm{I} & 0 & \dots & 0 \\
 0 & 0 & \mathbbm{I} & \dots & 0 \\
 \vdots & \vdots & \ddots & \ddots & \vdots \\
 0 & 0 & 0  & \ddots  & \mathbbm{I}  \\
 \mathbbm{J} & 0 & 0 & \dots & 0
 \end{array}
 \right),
 $$
 where $\mathbbm{I} = \mathbbm{1}_{m \times m}$ is the identity matrix of size $m$  and $\mathbbm{J}$
 is   the Jordan block $J_m(\lambda)$.
 \qed
 \end{remark}

\begin{theorem} Let $E$ be an irreducible nodal rational curve over an algebraically closed
 field
$\kk$ of characteristic $p >0$,
$\mathbbm{d} = (d_1, d_2, \dots, d_l)$ be a non-periodic sequence
of integers, $m \in \mathbb{N}$ and $\lambda \in \kk^*$.
Let $\kF = \kB\bigl(\mathbbm{d}, m, \lambda \bigr)$ be the corresponding indecomposable
vector bundle on $E$. Then we have:
\begin{equation}\label{E:VBactionFrob}
\varphi^* \kF  \cong
\kB\bigl((p d_1, p d_2, \dots, p d_l), m, \lambda^p\bigr).
\end{equation}
In particular,   the vector bundle $\varphi^*\kF$ is  indecomposable.
\end{theorem}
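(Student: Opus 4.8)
The plan is to combine the explicit matrix description of $\kB(\mathbbm{d}, m, \lambda)$ recorded in Remark \ref{R:classificDG} with the formula for the Frobenius action supplied by Corollary \ref{C:actionFrob}, and then to recognize the outcome as the canonical triple of a bundle of the same type. First I would observe that, by Remark \ref{R:classificDG}, the bundle $\kF$ corresponds under the equivalence $\FF$ of Theorem \ref{thm:Drozd-Greuel} to the triple $\bigl(\widetilde\kF, \kO_Z^{lm}, \mm\bigr)$ with $\widetilde\kF = \bigoplus_{i=1}^{l} \kO_{\PP^1}(d_i)^{m}$, with $M(0) = \mathbbm{1}_{ml \times ml}$, and with $M(\infty)$ the cyclic block matrix carrying the identity $\mathbbm{I} = \mathbbm{1}_{m \times m}$ on the superdiagonal and the Jordan block $\mathbbm{J} = J_m(\lambda)$ in the lower-left corner.

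Next I would invoke Lemma \ref{L:Frob-and-Triples}, by which $\varphi^*\kF$ corresponds to the triple $\PP\bigl(\FF(\kF)\bigr)$, and then Corollary \ref{C:actionFrob} to compute it: its bundle on the normalization is $\bigoplus_{i=1}^{l} \kO_{\PP^1}(p d_i)^{m}$, and its gluing matrices are obtained from $M(0)$ and $M(\infty)$ by raising each entry to the $p$-th power. The crucial — and essentially only — computational point is that this entrywise Frobenius preserves the shape of the canonical matrices: since $1^p = 1$ and $0^p = 0$ in $\kk$, the identity blocks remain identity blocks, and the Jordan block transforms as $J_m(\lambda) \mapsto J_m(\lambda^p)$, because the superdiagonal ones are fixed while only the diagonal eigenvalue is raised to the $p$-th power. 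Hence the triple $\PP\bigl(\FF(\kF)\bigr)$ is precisely the canonical triple attached in Remark \ref{R:classificDG} to the data $\bigl((p d_1, \dots, p d_l), m, \lambda^p\bigr)$.

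It then remains to pass back through the equivalence and to check admissibility of the new discrete datum. As $\FF$ is an equivalence of categories, the equality of associated triples gives $\varphi^*\kF \cong \kB\bigl((p d_1, \dots, p d_l), m, \lambda^p\bigr)$, which is the asserted isomorphism (\ref{E:VBactionFrob}). For the indecomposability claim I would verify that $(p d_1, \dots, p d_l)$ is again non-periodic: a relation $p d_i = p d_{i+k}$ in $\ZZ$ forces $d_i = d_{i+k}$ since $p \neq 0$ as an integer, so any period of the scaled sequence would be a period of $\mathbbm{d}$, contradicting the hypothesis. Thus $\varphi^*\kF$ is again one of the bundles classified in Remark \ref{R:classificDG} and is therefore indecomposable.

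The step I expect to require the most attention — although it is pleasantly short — is the middle one: confirming that the entrywise $p$-th power of the \emph{canonical} gluing matrices stays in canonical form, i.e.\ that the entrywise Frobenius is compatible with the Jordan normal form precisely because the off-diagonal structure consists only of zeros and ones. No genuine obstacle arises, since both the matrix form of the Frobenius action and the normal form of the bundles have already been made fully explicit in the statements preceding the theorem.
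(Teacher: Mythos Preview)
Your argument is correct and follows exactly the route the paper takes: the paper's own proof simply states that the result is a direct consequence of Theorem~\ref{thm:Drozd-Greuel}, Corollary~\ref{C:actionFrob}, and Remark~\ref{R:classificDG}, which is precisely the chain you unwind in detail. Your additional explicit check that $(pd_1,\dots,pd_l)$ remains non-periodic is a welcome clarification but not a departure from the paper's approach.
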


\begin{proof}
It is a direct consequence of Theorem \ref{thm:Drozd-Greuel}, Corollary \ref{C:actionFrob} and
Remark \ref{R:classificDG}.
\end{proof}

\begin{remark}
The same argument literally applies to the case,  when $E$ is a cycle of projective lines.
In particular, an analogous  formula (\ref{E:VBactionFrob}) holds in that case, too. Note that
in the case of elliptic curves it is in general \emph{not true} that the pull-back
of an indecomposable vector bundle under the Frobenius morphism is again indecomposable \cite[Theorem 2.16]{Oda}.
\end{remark}

\medskip
\noindent
\emph{Acknowledgement}. This work  was  supported by the DFG
project Bu-1866/2-1.

\end{document}